\def\mylabelonoff{off}
\def\allowdisbrkyesno{yes}
\def\numberingtheoremsectionyesno{no}
\def\numberingequationsectionyesno{no}
\def\pagesizeextendednormal{extended}
\def\reportudemathyesno{no}
\def\reportudemathnumber{SM-UDE-791}
\def\reportudemathyear{2016}
\def\reportudematheingang{\mydate}
\def\mytitle{On Korn's First Inequality for Mixed Tangential and Normal Boundary Conditions\\
on Bounded Lipschitz Domains in $\reals^N$}
\def\mytitlerepude{On Korn's First Inequality\\ 
for Mixed Tangential and Normal Boundary Conditions\\
on Bounded Lipschitz-Domains in $\reals^N$}
\def\myshorttitle{On Korn's First Inequality}
\def\myauthorone{Sebastian Bauer}
\def\myauthortwo{Dirk Pauly}
\def\myauthors{\myauthorone\quad\&\quad\myauthortwo}
\def\myaddressone{Fakult\"at f\"ur Mathematik,
Universit\"at Duisburg-Essen, Campus Essen, Germany}
\def\myemailone{sebastian.bauer.seuberlich@uni-due.de}
\def\myemailtwo{dirk.pauly@uni-due.de}
\def\mykeywords{Korn inequality, tangential and normal boundary conditions, Boltzmann equation}
\def\mysubjclass{49J40 / 82C40 / 76P05}
\def\mydate{\today}
\newcommand{\mylabel}[1]{\label{#1}\fbox{{\sf #1}}}}
\newcommand{\mylabel}[1]{\label{#1}}}
\numberwithin{equation}{section}}
\newcommand{\ovl}[1]{\overline{#1}}
\newcommand{\ck}{c_{\mathsf{k}}}
\newcommand{\set}[2]{\{#1\,:\,#2\}}
\newcommand{\setb}[2]{\big\{#1\,:\,#2\big\}}
\newcommand{\setlr}[2]{\left\{#1\,:\,#2\right\}}
\newtheorem{lem}{Lemma}[section]}
\newtheorem{lem}{Lemma}}
\newtheorem{defi}[lem]{Definition}
\newtheorem{theo}[lem]{Theorem}
\newtheorem{cor}[lem]{Corollary}
\newtheorem{rem}[lem]{Remark}
\newtheorem{pro}[lem]{Proposition}
\newcommand{\om}{\Omega}
\newcommand{\ga}{\Gamma}
\newcommand{\gat}{\ga_{\mathsf{t}}}
\newcommand{\gan}{\ga_{\mathsf{n}}}
\newcommand{\eps}{\epsilon}
\newcommand{\calR}{\mathcal{R}}
\newcommand{\calN}{\mathcal{N}}
\newcommand{\reals}{\mathbb{R}}
\newcommand{\rt}{\reals^{3}}
\newcommand{\rN}{\reals^{N}}
\newcommand{\ot}{\leftarrow}
\DeclareMathOperator{\id}{id}
\DeclareMathOperator{\sym}{sym}
\DeclareMathOperator{\skw}{skw}
\DeclareMathOperator{\dist}{dist}
\newcommand{\tcomp}[1]{#1_{\mathsf{t}}}
\newcommand{\ncomp}[1]{#1_{\mathsf{n}}}
\newcommand{\p}{\partial}
\newcommand{\na}{\nabla}
\DeclareMathOperator{\divergence}{div}
\renewcommand{\div}{\divergence}
\newcommand{\csymbol}{\mathsf{C}}
\newcommand{\cgen}[3]{\overset{#1}{\csymbol}{}^{#2}_{#3}}
\newcommand{\cic}{\cgen{\circ}{\infty}{}}
\newcommand{\cict}{\cgen{\circ}{\infty}{\mathsf{t}}}
\newcommand{\cicn}{\cgen{\circ}{\infty}{\mathsf{n}}}
\newcommand{\cictn}{\cgen{\circ}{\infty}{\mathsf{t,n}}}
\newcommand{\co}{\cgen{}{1}{}}
\newcommand{\coo}{\cgen{}{1,1}{}}
\newcommand{\ct}{\cgen{}{2}{}}
\newcommand{\cicrN}{\cic(\rN)}
\newcommand{\cicom}{\cic(\om)}
\newcommand{\cictom}{\cict(\om)}
\newcommand{\cicnom}{\cicn(\om)}
\newcommand{\cictnom}{\cictn(\om)}
\newcommand{\lsymbol}{\mathsf{L}}
\newcommand{\lgen}[3]{\overset{#1}{\lsymbol}{}^{#2}_{#3}}
\newcommand{\lt}{\lgen{}{2}{}}
\newcommand{\li}{\lgen{}{\infty}{}}
\newcommand{\ltz}{\lgen{}{2}{0}}
\newcommand{\ltom}{\lt(\om)}
\newcommand{\ltzom}{\ltz(\om)}
\newcommand{\hsymbol}{\mathsf{H}}
\newcommand{\hgen}[3]{\overset{#1}{\hsymbol}{}^{#2}_{#3}}
\newcommand{\ho}{\hgen{}{1}{}}
\newcommand{\hoc}{\hgen{\circ}{1}{}}
\newcommand{\hoct}{\hgen{\circ}{1}{\mathsf{t}}}
\newcommand{\hocn}{\hgen{\circ}{1}{\mathsf{n}}}
\newcommand{\hoctn}{\hgen{\circ}{1}{\mathsf{t,n}}}
\newcommand{\hoom}{\ho(\om)}
\newcommand{\hocom}{\hoc(\om)}
\newcommand{\hoctom}{\hoct(\om)}
\newcommand{\hocnom}{\hocn(\om)}
\newcommand{\hoctnom}{\hoctn(\om)}
\newcommand{\hmo}{\hgen{}{-1}{}}
\newcommand{\hmoom}{\hmo(\om)}
\newcommand{\norm}[1]{|#1|}
\newcommand{\normlt}[1]{\norm{#1}_{\lt}}
\newcommand{\normltom}[1]{\norm{#1}_{\ltom}}
\newcommand{\normhmoom}[1]{\norm{#1}_{\hmoom}}
\newcommand{\scp}[2]{\langle#1,#2\rangle}
\newcommand{\bscp}[2]{\big\langle#1,#2\big\rangle}
\newcommand{\scpltom}[2]{\scp{#1}{#2}_{\ltom}}
\newcommand{\preprintudemath}[5]{
\thispagestyle{empty}
\Large
\begin{center}SCHRIFTENREIHE DER FAKULT\"AT F\"UR MATHEMATIK\end{center}
\vspace*{5mm}
\begin{center}#1\end{center}
\vspace*{5mm}
\begin{center}by\end{center}
\begin{center}#2\end{center}
\vspace*{5mm}
\begin{center}#3\hspace{80mm}#4\end{center}
\newpage
\thispagestyle{empty}
\vspace*{210mm}
Received: #5
\newpage
\addtocounter{page}{-2}
\normalsize}
\title[\sc\myshorttitle]{\Large\sf\mytitle}
\author{\myauthorone}
\author{\myauthortwo}
\address{\myaddressone}
\email[\myauthorone]{\myemailone}
\email[\myauthortwo]{\myemailtwo}
\keywords{\mykeywords}
\subjclass{\mysubjclass}
\date{\mydate}
\newcommand{\so}{\mathfrak{so}}
\newcommand{\soN}{\so(N)}
\newcommand{\rmR}{\mathcal{R}}
\newcommand{\K}{\mathcal{K}}
\newcommand{\calL}{\mathcal{L}}
\newcommand{\sfS}{\mathsf{S}}
\newcommand{\Sz}{\sfS_{0}}
\newcommand{\Som}{\sfS(\om)}
\newcommand{\Szom}{\Sz(\om)}
\begin{document}


\ifthenelse{\equal{\reportudemathyesno}{yes}}
{\preprintudemath{\mytitlerepude}{\myauthors}{\reportudemathnumber}{\reportudemathyear}{\reportudematheingang}}
{}


\begin{abstract}
We prove that for bounded Lipschitz domains in $\rN$
Korn's first inequality holds for vector fields satisfying
homogeneous mixed tangential and normal boundary conditions.
\end{abstract}

\maketitle
\tableofcontents


\section{Introduction}

Recently, motivated by \cite{desvillettesvillanikornnormal,desvillettesvillanitrendglequiboltzmann}
and inspired by the ideas and techniques presented 
in \cite{paulymaxconst0,paulymaxconst1,paulymaxconst2} for estimating the Maxwell constants,
we have shown in \cite{bauerpaulytangnormkorn}
that Korn's first inequality, i.e.,
\begin{align}
\mylabel{firstkornsqrttwo}
\normltom{\na v}
&\leq\ck\,\normltom{\sym\na v},
\end{align}
holds with $\ck=\sqrt2$ for all vector fields $v\in\hoom$ satisfying (possibly mixed)
homogeneous normal or homogenous tangential boundary conditions
and for all piecewise $\coo$-domains $\om\subset\rN$, $N\geq2$,
with concave boundary parts. In this contribution, we extend \eqref{firstkornsqrttwo}
to any bounded (strong) Lipschitz domain $\om\subset\rN$, $N\geq2$. 
As pointed out in \cite{desvillettesvillanitrendglequiboltzmann},
this Korn inequality has an important application 
in statistical physics, more precisely in the study of relaxation to equilibrium of rarefied gases 
modeled by Boltzmann's equation.

\section{Preliminaries}

We will utilize the notations from \cite{bauerpaulytangnormkorn}.
Throughout this paper and unless otherwise explicitly stated, 
let $\om\subset\rN$, $N\geq2$, be a bounded domain
with strong Lipschitz boundary $\ga:=\p\om$, i.e.,
locally $\ga$ can be represented as a graph of a Lipschitz function.
As in \cite{bauerpaulytangnormkorn},
we introduce the standard scalar valued Lebesgue and Sobolev spaces 
by $\ltom$ and $\hoom$ as well as
$$\hocom:=\ovl{\cicom}^{\hoom},$$
respectively, where $\cicom$ denotes the test functions
yielding the usual Sobolev space $\hocom$ with zero boundary traces.
These definitions extend component-wise to vector or matrix, or more general tensor fields
and we will use the same notations for these spaces. 
Moreover, we will consistently denote functions by $u$ and vector fields by $v$.
We define the vector valued $\ho$-Sobolev space $\hoctom$ resp. $\hocnom$ 
as closure in $\hoom$ of the set of test vector fields 
\begin{align}
\mylabel{CtCn_def}
\cictom
:=\setb{v|_{\om}}{v\in\cicrN,\,\tcomp{v}=0},\qquad
\cicnom
:=\setb{v|_{\om}}{v\in\cicrN,\,\ncomp{v}=0},
\end{align}
respectively, generalizing homogeneous tangential resp. normal boundary conditions. 
Here, $\nu$ denotes the a.e. defined outer unit normal at $\ga$ giving a.e. 
the normal resp. tangential component
$$\ncomp{v}:=\nu\cdot v|_{\ga},\quad
\tcomp{v}:=v|_{\ga}-\ncomp{v}\nu$$
of $v$ on $\ga$.
We assume additionally that $\ga$ is decomposed into two relatively open subsets
$\gat$ and $\gan:=\ga\setminus\ovl{\gat}$ and introduce
the vector valued $\ho$-Sobolev space of mixed boundary conditions $\hoctnom$
as closure in $\hoom$ of the set of test vector fields
\begin{align}
\mylabel{Ctn_def}
\cictnom
:=\setb{v|_{\om}}{v\in\cicrN,\,\tcomp{v}|_{\gat}=0,\,\ncomp{v}|_{\gan}=0}.
\end{align}

\subsection{Korn's Second Inequality}

It is well known that Korn's second inequality 
can easily be proved by a simple $\hmo$-argument using Ne\u{c}as inequality.
Let us illustrate a simple and short proof:
In the sense of distributions we have e.g. for all vector fields $v\in\ltom$
that the components of $\na\na v$\footnote{We denote by $\na v$
the transpose of the Jacobian of $v$ and by $\na\na v$
the tensor of second derivatives of $v$.} 
consist only of components of $\na\sym\na v$, i.e.,
\begin{align}
\mylabel{nasymna}
\forall\,i,j,k=1,\dots,N\qquad
\p_{i}\p_{j}v_{k}
&=\p_{i}\sym_{j,k}\na v
+\p_{j}\sym_{i,k}\na v
-\p_{k}\sym_{i,j}\na v,
\end{align}
where $\sym_{j,k}T:=(\sym T)_{j,k}$.
By e.g. \cite[1.1.3 Lemma]{sohrbook} we have (for scalar functions)
the Ne\u{c}as estimate
\begin{align}
\mylabel{necas}
\exists\,c&>0&
\forall\,u&\in\ltom&
c\,\normltom{u}
&\leq\normhmoom{\na u}+\normhmoom{u}
\leq(\sqrt{N}+1)\normltom{u},
\end{align}
where $\hmoom:=\big(\hocom\big)'$ and e.g. by using the full $\hoom$-norm
$$\normhmoom{u}
:=\sup_{0\neq\varphi\in\hocom}
\frac{\scpltom{u}{\varphi}}{\norm{\varphi}_{\hoom}},\qquad
\normhmoom{\na u}
:=\sup_{0\neq\phi\in\hocom}
\frac{\scpltom{u}{\div\phi}}{\norm{\phi}_{\hoom}}.$$
For the original results of \eqref{necas} see the works of Ne\u{c}as, e.g.
\cite{necasbook1967,necaspaper1967a}, from the 1960s.

\begin{rem}
\mylabel{lbb}
Ne\u{c}as' estimate \eqref{necas} can be refined to 
\begin{align}
\mylabel{necasproj}
\exists\,c&>0&
\forall\,u&\in\ltzom:=\set{u\in\ltom}{\scpltom{u}{1}=0}&
c\,\normltom{u}
&\leq\normhmoom{\na u}
\leq\sqrt{N}\normltom{u}.
\end{align}
The best constant $c>0$ in \eqref{necasproj} is
also called inf-sup- or LBB-constant as by using the $\hoom$-half norm
\begin{align*}
c=\inf_{0\neq u\in\ltzom}
\frac{\normhmoom{\na u}}{\normltom{u}}
=\inf_{0\neq u\in\ltzom}\sup_{v\in\hocom}
\frac{\scpltom{u}{\div v}}{\normltom{u}\normltom{\na v}}
=c_{\mathsf{LBB}}.
\end{align*}
We note that the LBB-constant can be bounded from below by the 
inverse of the continuity constant $c_{A}$ of the $\ho$-potential operator
(often called Bogovskii operator)
$A:\ltzom\to\hocom$ with $\div Au=u$, i.e.,
\begin{align*}
\forall\,u&\in\ltzom&
\normltom{\na Au}
&\leq c_{A}\normltom{u}.
\end{align*}
This follows directly by setting $v:=Au$
(note that $\na Au\neq0$ for $0\neq u\in\ltzom$) and
$$c_{\mathsf{LBB}}
\geq\inf_{0\neq u\in\ltzom}
\frac{\normltom{u}^2}{\normltom{u}\normltom{\na Au}}
\geq\frac{1}{c_{A}}.$$
\end{rem}

We immediately get:

\begin{theo}
[Korn's second inequality]
\mylabel{seckorn}
There exists $c>0$ such that for all $v\in\hoom$
\begin{align*}
\normltom{\na v}
&\leq c\big(\normltom{\sym\na v}
+\normltom{v}\big).
\end{align*}
\end{theo}

\begin{proof}
Let $v\in\hoom$. Combining \eqref{nasymna} and \eqref{necas} we estimate
\begin{align*}
\normltom{\na v}
&\leq c\,\big(\normhmoom{\na\na v}+\normhmoom{\na v}\big)\\
&\leq c\,\big(\normhmoom{\na\sym\na v}+\normhmoom{\na v}\big)
\leq c\,\big(\normltom{\sym\na v}+\normltom{v}\big),
\end{align*}
showing the stated result.
\end{proof}

By standard mollification we see that the restrictions 
of $\cicrN$-vector fields to $\om$ are dense in 
$$\Som:=\set{v\in\ltom}{\sym\na v\in\ltom},$$
even if $\om$ just has the segment property. 
Especially $\hoom$ is dense in $\Som$. 
This shows immediately:

\begin{theo}
[$\ho$-regularity]
\mylabel{horeg}
It holds $\Som=\hoom$.
\end{theo}

\begin{proof}
Let $v\in\Som$. By density, there exists a sequence $(v_{n})\subset\hoom$
converging to $v$ in $\Som$. By Theorem \ref{seckorn} $(v_{n})$ is a Cauchy sequence
in $\hoom$ converging to $v$, yielding $v\in\hoom$.
\end{proof}

\begin{rem}
The latter arguments show, that for any domain allowing for Ne\u{c}as' estimate \eqref{necas}
Korn's second inequality Theorem \ref{seckorn} holds.
In these domains we have also the $\ho$-regularity Theorem \ref{horeg},
provided that the segment property holds.
\end{rem}

\begin{rem}
\label{lqremone}
\eqref{necas} is well known to hold also in the 
$\mathsf{L}^q/\,\mathsf{W}^{-1,q}$-setting for $1<q<\infty$.
As \eqref{nasymna} and the mollification techniques are available for general $q$,
it follows that Theorem \ref{seckorn} and Theorem \ref{horeg} immediately extend to the 
$\mathsf{L}^q/\,\mathsf{W}^{1,q}/\,\mathsf{S}^{q}$-setting 
for all $1<q<\infty$.
\end{rem}

\subsection{Poincar\'e Inequality for Elasticity}

To apply standard solution theories for linear elasticity, 
such as Fredholm's alternative for bounded domains
or Eidus' limiting absorption principle \cite{eiduslabp}
for exterior domains, it is most important to ensure for bounded domains the compact embedding 
\begin{align}
\mylabel{ecp}
\Som\hookrightarrow\ltom.
\end{align}
As long as Korn's second inequality, i.e., 
the continuous embedding $\Som\hookrightarrow\hoom$,
holds true, the compact embedding \eqref{ecp} follows immediately by Rellich's selection theorem, i.e.,
the compact embedding $\hoom\hookrightarrow\ltom$.
As shown in \cite{weckelacomp},
there are bounded irregular domains, more precisely bounded domains 
with the $p$-cusp property (H\"older boundaries), 
see \cite[Definition 3]{witschremmax} or \cite[Definition 2]{weckelacomp},
with $1<p<2$, for which Korn's second inequality fails
and so the embedding $\Som\subset\hoom$ 
by the closed graph theorem\footnote{The
identity mapping $\id_{\mathsf{S}}:\Som\to\hoom$ is continuous,
if and only if $\id_{\mathsf{S}}$ is closed,
if and only if $\Som\subset\hoom$.},
but the important compact embedding \eqref{ecp} remains valid.
More precisely, by \cite[Theorem 2]{weckelacomp} the compact embedding \eqref{ecp} 
holds for bounded domains having the $p$-cusp property with $1\leq p<2$\footnote{For $p=1$ 
the $1$-cusp property equals the strict cone property,
which itself holds for strong Lipschitz domains.},
and \eqref{ecp} implies immediately a Poincar\'e type inequality
for elasticity by a standard indirect argument. For this we define
$$\Szom
:=\set{v\in\Som}{\sym\na v=0}
=\set{v\in\ltom}{\sym\na v=0}.$$
It is well known that even for any domain $\om$
$$\Szom
=\rmR$$
holds, where $\rmR:=\set{Sx+a}{S\in\so\,\wedge\,a\in\rN}$ is the space rigid motions
and $\so=\soN$ the vector space of constant skew-symmetric matrices.
This follows easily for $v\in\Szom$ by approximating $\om$ by smooth domains $\om_{n}$,
in each of which $v_{n}:=v|_{\om_{n}}$ equals the same rigid motion $r\in\rmR$.

\begin{theo}
[Poincar\'e inequality for elasticity]
\mylabel{poincareela}
Let $\om$ be bounded and possess the $p$-cusp property with some $1\leq p<2$.
Then there exists $c>0$ such that for all $v\in\Som\cap\rmR^{\bot}$
$$\normltom{v}
\leq c\,\normltom{\sym\na v}.$$
Equivalently, for all $v\in\Som$
$$\normltom{v-r_{v}}
\leq c\,\normltom{\sym\na v},\qquad
r_{v}:=\pi_{\rmR}v.$$
\end{theo}

Here and throughout the paper, we denote orthogonality in $\ltom$ by $\bot$.
Moreover, $\pi_{\rmR}$ denotes the $\ltom$-orthogonal projector
onto the rigid motions $\rmR$. 

\begin{proof}
If the assertion was wrong, there exists a sequence $(v_{n})\subset\Som\cap\rmR^{\bot}$
with $\normltom{v_{n}}=1$ and $\normltom{\sym\na v_{n}}\to0$.
By \eqref{ecp} we can assume without loss of generality 
that $(v_{n})$ converges in $\ltom$ to some $v\in\ltom$.
But then $v\in\Szom\cap\rmR^{\bot}=\{0\}$,
in contradiction to $1=\normltom{v_{n}}\to\normltom{v}=0$.
\end{proof}

Under the assumptions of Theorem \ref{poincareela},
the variational static linear elasticity problem,
for $f\in\ltom$ find $v\in\Som\cap\rmR^{\bot}$ such that 
$$\forall\,\phi\in\Som\cap\rmR^{\bot}\qquad
\scpltom{\sym\na v}{\sym\na\phi}
=\scpltom{f}{\phi},$$
is uniquely solvable with continuous resp. compact inverse 
$\ltom\to\Som$ resp. $\ltom\to\ltom$, which shows that 
Fredholm's alternative holds for the corresponding reduced operators.

\section{Korn's First Inequality}

By Rellich's selection theorem, Theorem \ref{seckorn} 
and an indirect argument we can easily prove:

\begin{theo}
[Korn's first inequality without boundary conditions]
\mylabel{firstkorn}
There exists $c>0$ such that for all $v\in\hoom$ with $\na v\bot\,\so$
\begin{align}
\mylabel{firstkornbot}
\normltom{\na v}
&\leq c\,\normltom{\sym\na v}.
\intertext{Equivalently for all $v\in\hoom$}
\nonumber
\normltom{\na v-S_{v}}
&\leq c\,\normltom{\sym\na v},\qquad
S_{v}:=\frac{1}{|\om|}\skw\int_{\om}\na v.
\end{align}
\end{theo}

Here, $S_{v}=\pi_{\so}\na v$ is the $\ltom$-orthogonal projection 
of $\na v$ onto $\so$. 

\begin{proof}
The equivalence is clear by the orthogonal projection.\footnote{We can also 
compute it by hand: For $v\in\hoom$ with $\na v\bot\,\so$ we see
$$|S_{v}|^2
=\frac{1}{|\om|}\scp{\skw\int_{\om}\na v}{S_{v}}
=\frac{1}{|\om|}\scpltom{\na v}{S_{v}}
=0$$
since $S_{v}\in\so$. For $v\in\hoom$ and $T\in\so$ we have
\begin{align*}
\scpltom{\na v-S_{v}}{T}
&=\int_{\om}\scp{\skw\na v}{T}-\scpltom{S_{v}}{T}
=\scp{\int_{\om}\skw\na v}{T}-|\om|\scp{S_{v}}{T}
=0,
\end{align*}
implying $v+s_{v}\in\hoom$ with $\na(v+s_{v})=(\na v-S_{v})\bot\,\so$ 
and $\sym\na(v+s_{v})=\sym(\na v-S_{v})=\sym\na v$,
where $s_{v}(x):=S_{v}x$.}
If \eqref{firstkornbot} was wrong, there exists a sequence $(v_{n})\subset\hoom$
with $\na v_{n}\bot\,\so$ and $\normltom{\na v_{n}}=1$ and $\normltom{\sym\na v_{n}}\to0$.
Without loss of generality we can assume $v_{n}\bot\,\rN$.
By Poincare's inequality $(v_{n})$ is bounded in $\hoom$.
Thus, by Rellich's selection theorem we can assume without loss of generality 
that $(v_{n})$ converges in $\ltom$ to some $v\in\ltom$.
By Theorem \ref{seckorn} $(v_{n})$ is a Cauchy sequence in $\hoom$.
Therefore $(v_{n})$ converges in $\hoom$ to $v\in\hoom\cap(\rN)^{\bot}$
with $\sym\na v=0$ and $\na v\bot\,\so$. But then $\na v$ is even constant
and belongs to $\so$. 
Hence $\na v=0$\footnote{We note that even $v\in\rN$ holds and thus $v=0$.} 
in contradiction to $1=\normltom{\na v_{n}}\to\normltom{\na v}=0$.
\end{proof}

Using Poincare's inequality we immediately obtain:

\begin{cor}
[Korn's first inequality without boundary conditions]
\mylabel{firstkorncor}
There exists $c>0$ such that for all $v\in\hoom\cap(\rN)^{\bot}$ with $\na v\bot\,\so$
\begin{align*}
\norm{v}_{\hoom}
&\leq c\,\normltom{\sym\na v}.
\end{align*}
\end{cor}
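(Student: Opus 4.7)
The plan is to combine Theorem \ref{firstkorn} with the classical scalar Poincar\'e inequality on bounded Lipschitz domains, applied componentwise. Since the hypothesis already provides both mean conditions (one for $v$, one for $\na v$), the corollary is a one-line bookkeeping exercise.

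First I would apply Theorem \ref{firstkorn} directly to $v$. This is admissible because $v\in\hoom$ and $\na v\bot\,\so$, so there exists $c_{1}>0$ (depending only on $\om$) with
$$\normltom{\na v}\leq c_{1}\normltom{\sym\na v}.$$
Second, the assumption $v\bot\,\rN$, read as $\ltom$-orthogonality against the constant vector fields, means that each scalar component of $v$ has vanishing integral over $\om$. Hence the standard Poincar\'e inequality for zero-mean scalar functions on the bounded Lipschitz domain $\om$ applies componentwise, producing $c_{2}>0$ with
$$\normltom{v}\leq c_{2}\normltom{\na v}.$$

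To conclude I would just add the squared estimates, using the definition $\norm{v}_{\hoom}^{2}=\normltom{v}^{2}+\normltom{\na v}^{2}$, so that
$$\norm{v}_{\hoom}^{2}\leq(c_{2}^{2}+1)\normltom{\na v}^{2}\leq c_{1}^{2}(c_{2}^{2}+1)\normltom{\sym\na v}^{2},$$
which gives the asserted inequality with $c:=c_{1}\sqrt{1+c_{2}^{2}}$.

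There is no genuine obstacle: the two orthogonality conditions imposed in the statement are precisely the minimal requirements needed to kill the kernels of the two ingredients. The projection $\na v\bot\,\so$ rules out the rotational part of $\na v$ invisible to $\sym\na v$, which is exactly what makes Theorem \ref{firstkorn} applicable, while $v\bot\,\rN$ eliminates the constant translations in the kernel of $\na$, which is exactly what makes Poincar\'e's inequality applicable. The only mild care is to note that the Poincar\'e step only sees scalar zero-mean components; since $v\bot\,\rN$ holds componentwise, this causes no trouble.
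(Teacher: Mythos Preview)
Your proof is correct and matches the paper's approach exactly: the paper simply writes ``Using Poincar\'e's inequality we immediately obtain'' before stating the corollary, and your argument spells out precisely that combination of Theorem \ref{firstkorn} with the zero-mean Poincar\'e inequality applied componentwise.
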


In order to prove Korn's first inequality in $\hoctnom$ 
we  need a Poincar\'e type estimate on this space. 
It should be noted that in general mixed boundary conditions are not sufficient   
to rule out a kernel of the gradient operator. 
For example, consider the cube $\om:=(0, \,1)^3\subset \reals^3$ 
with $\gat$ being the union of the top and bottom
together with the constant vector field $r(x):=(0,\,0,\,1)^t$.
Then $r\in \hoctnom$. On this account, 
such constant vector fields have to be excluded separately.
 
\begin{lem}
[Poincar\'e inequality with tangential or normal boundary conditions]
\mylabel{poincaretannor} 
There exists $c>0$ such that for all 
$v\in\hoctnom\cap \big(\hoctnom\cap\reals^N\big)^\perp$
$$\normltom{v}
\leq c\,\normltom{\na v}.$$
\end{lem}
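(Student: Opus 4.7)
The plan is to imitate the indirect compactness argument of Theorem~\ref{poincareela} and Theorem~\ref{firstkorn}, with the forbidden kernel now being the finite-dimensional space
\[
K:=\hoctnom\cap\reals^N
\]
of those constant vector fields that happen to satisfy the prescribed mixed boundary conditions. Since $K\subset\reals^N$ it has dimension at most $N$, so $K$ is closed in $\ltom$ and the $\ltom$-orthogonal projection onto $K$ is well-defined. Assuming the claimed inequality fails, I would pick a sequence $(v_n)\subset\hoctnom\cap K^\perp$ with $\normltom{v_n}=1$ and $\normltom{\na v_n}\to 0$, and derive a contradiction.

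By construction $(v_n)$ is bounded in $\hoom$, so Rellich's selection theorem (this is the only point at which the bounded Lipschitz hypothesis really enters) provides a subsequence converging in $\ltom$ to some $v$. Together with $\normltom{\na v_n}\to 0$ this forces $(v_n)$ to be Cauchy in $\hoom$, hence $v\in\hoom$ with $\na v=0$; connectedness of $\om$ then identifies $v$ with a constant vector field in $\reals^N$.

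To close the argument I would verify $v\in K\cap K^\perp=\{0\}$. Closedness of $\hoctnom$ in $\hoom$ together with $v_n\to v$ in $\hoom$ gives $v\in\hoctnom\cap\reals^N=K$; closedness of $K^\perp$ in $\ltom$ together with $v_n\to v$ in $\ltom$ gives $v\in K^\perp$. Hence $v=0$, contradicting $\normltom{v}=\lim\normltom{v_n}=1$. I do not foresee any genuine obstacle: the whole content of the lemma is the identification of the $\na$-kernel of $\hoctnom$ with the finite-dimensional space $K$, with Rellich's theorem doing the usual compactness work. The mixed boundary conditions themselves enter only via closedness of $\hoctnom$ in $\hoom$, which holds by construction, and no further information about the decomposition $\ga=\gat\cup\gan$ or about the explicit shape of $K$ is required.
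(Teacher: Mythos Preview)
Your argument is correct and coincides with the paper's own proof: both assume the inequality fails, extract by Rellich a subsequence converging in $\ltom$ and hence in $\hoom$, identify the limit as a constant in $\hoctnom\cap\reals^N$ that is also orthogonal to this space, and reach the contradiction $v=0$ against $\normltom{v}=1$. Your version is slightly more explicit (finite-dimensionality of $K$, connectedness of $\om$, closedness of $\hoctnom$), but the structure and all ingredients are identical.
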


\begin{proof}
If the assertion was wrong, there exists some sequence 
$(v_{n})\subset\hoctnom\cap \big(\hoctnom\cap\reals^N\big)^\perp$
with $\normltom{v_{n}}=1$ and $\normltom{\na v_{n}}\to0$.
Thus, by Rellich's selection theorem we can assume without loss of generality 
that $(v_{n})$ converges in $\ltom$ to some $v\in\ltom$.
Hence, $(v_{n})$ is a Cauchy sequence in $\hoom$ 
and converges in $\hoom$ to $v\in\hoctnom\cap \big(\hoctnom\cap\reals^N\big)^\perp$
with $\na v=0$. Therefore, $v$ is a constant in $\hoctnom\cap\rN$ 
and must vanish in contradiction to $1=\normltom{v_{n}}\to\normltom{v}=0$.
\end{proof}

As an easy consequence we get

\begin{cor}
\mylabel{nahoclosed}
$\na\hoctnom$  is a closed subspace of $\ltom$.
\end{cor}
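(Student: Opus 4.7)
The plan is to show closedness directly from the Poincaré estimate of Lemma \ref{poincaretannor}. Let $(g_n) \subset \na\hoctnom$ with $g_n \to g$ in $\ltom$, so $g_n = \na v_n$ for some $v_n \in \hoctnom$. I want to produce $v \in \hoctnom$ with $\na v = g$.

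The obstacle is that $(v_n)$ itself need not be bounded in $\ltom$: the Poincaré inequality only controls vector fields orthogonal to the (finite-dimensional) kernel $\K := \hoctnom \cap \reals^N$. The fix is to subtract off the kernel component. Since $\K$ is a finite-dimensional subspace of $\ltom$, the $\ltom$-orthogonal projector $\pi_{\K}$ onto $\K$ is well defined, and because $\K \subset \hoctnom$ consists of constant vector fields, the modified sequence
\[
\tilde v_n := v_n - \pi_{\K} v_n \in \hoctnom \cap \K^\perp
\]
still lies in $\hoctnom$ and satisfies $\na\tilde v_n = \na v_n = g_n$.

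Now I apply Lemma \ref{poincaretannor} to the differences: for all $n,m$,
\[
\normltom{\tilde v_n - \tilde v_m} \leq c\,\normltom{\na \tilde v_n - \na \tilde v_m} = c\,\normltom{g_n - g_m}.
\]
Since $(g_n)$ is Cauchy in $\ltom$, so is $(\tilde v_n)$, and consequently $(\tilde v_n)$ is Cauchy in $\hoom$. By completeness, $\tilde v_n \to v$ in $\hoom$ for some $v$. Closedness of $\hoctnom$ in $\hoom$ gives $v \in \hoctnom$, and continuity of $\na$ gives $\na v = g$. Hence $g \in \na\hoctnom$, which proves closedness.
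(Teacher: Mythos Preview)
Your proof is correct and follows essentially the same route as the paper: subtract the orthogonal projection onto the finite-dimensional kernel $\hoctnom\cap\reals^N$ so that Lemma \ref{poincaretannor} applies, deduce that the modified sequence is Cauchy in $\hoom$, and pass to the limit inside the closed subspace $\hoctnom$. One minor point: avoid the symbol $\K$ for $\hoctnom\cap\reals^N$, since the paper reserves $\K$ for $\so\cap\na\hoctnom$ just a few lines later.
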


\begin{proof}
Let $(v_{n})\subset\hoctnom$ such that $\na v_{n}\to G\in\ltom$ in $\ltom$.
Without loss of generality we can assume 
$(v_{n})\subset\hoctnom\cap \big(\hoctnom\cap\reals^N\big)^\perp$,
otherwise we replace $v_n$ by 
$$\tilde{v}_n:=v_n-\pi_{\hoctnom\cap\rN}v_n\in\hoctnom\cap \big(\hoctnom\cap\reals^N\big)^\perp,$$
where $\pi_{\hoctnom\cap\rN}$ is the orthogonal projector onto $\hoctnom\cap\rN$.
Because of Lemma \ref{poincaretannor} $(v_{n})$ is a Cauchy sequence in $\hoctnom$,
which converges in $\hoom$ to $v\in\hoctnom$.
Hence, $G\ot\na v_{n}\to\na v\in\na\hoctnom$.
\end{proof}

To exclude the kernel of the $\sym\na$-operator on $\hoctnom$, we define
$$\K:=\set{\na v}{v\in\hoctnom,\,\sym\na v=0}
=\na\big(\rmR\cap\hoctnom\big)
=\so\cap\na\hoctnom.$$

\begin{theo}
[Korn's first inequality with tangential or normal boundary conditions]
\mylabel{firstkorntannor}
There exists $c>0$ such that for all $v\in\hoctnom$ with $\na v\bot\,\K$
\begin{align}
\mylabel{firstkorntannorineq}
\normltom{\na v}
\leq c\,\normltom{\sym\na v}.
\end{align}
Equivalently, for all $v\in\hoctnom$
\begin{align*}
\normltom{\na v-\pi_{\K}\na v}
&\leq c\,\normltom{\sym\na v}.
\end{align*}
\end{theo}

Here, $\pi_{\K}$ denotes the $\ltom$-orthogonal projector onto $\K$. 

\begin{proof}
Equivalence is again clear by the orthogonal projection.
If \eqref{firstkorntannorineq} was wrong, there exists a sequence $(v_{n})\subset\hoctnom$
with $\na v_{n}\bot\,\K$ and $\normltom{\na v_{n}}=1$ and $\normltom{\sym\na v_{n}}\to0$.
Without loss of generality we can assume $(v_{n})\subset\hoctnom\cap \big(\hoctnom\cap\reals^N\big)^\perp$.
By Lemma \ref{poincaretannor} $(v_{n})$ is bounded in $\hoom$,
and thus, using Rellich's selection theorem, we can assume without loss of generality 
that $(v_{n})$ converges in $\ltom$ to some $v\in\ltom$.
By Theorem \ref{seckorn} $(v_{n})$ is a Cauchy sequence in $\hoom$.
Therefore, $(v_{n})$ converges in $\hoom$ to $v\in\hoctnom$
with $\sym\na v=0$ and $\na v\bot\,\K$. 
But then, $\na v$ is even a constant in $\so$, i.e., $\na v\in\K$,
in contradiction to $1=\normltom{\na v_{n}}\to\normltom{\na v}=0$.
\end{proof}

\begin{rem}
\label{lqremtwo}
Similar to Remark \ref{lqremone}, all the results from Theorem \ref{firstkorn}
to Theorem \ref{firstkorntannor} extend to the 
$\mathsf{L}^q/\,\mathsf{W}^{1,q}$-setting for all $1<q<\infty$
with the obvious modifications. The same holds true for all results
presented in the subsequent sections.
\end{rem}

\subsection{Discussing the Set $\K$}

In this section we shall discuss which combinations of domains $\om$ and boundary parts $\gat$ 
allow for a non-constant rigid motion $r\in\hoctnom\cap\rmR$, i.e., 
$\K\neq\{0\}$. We start with the case $\gat=\ga$, i.e, 
with the full tangential boundary condition.

\begin{theo} 
If $\gat=\ga$, then $\K=\{0\}$ and there exists a constant $c>0$ 
such that for all $v\in\hoctom$
\begin{align*}	
\normltom{\na v}
&\leq c\,\normlt{\sym\na v}.
\end{align*}
\end{theo}

\begin{proof}
We give a proof by contradiction. 
Assume $r\in\rmR\cap\hoctom$ and $r\not=0$. 
Let us define the null space $\calN_r:=\setlr{x\in\rN}{r(x)=0}$.
Then $\calN_r$ is an empty set or an affine plane in $\rN$ 
with dimension $d_{\calN_r}\leq N-2$.
We recall that $\nu$ is the outer unit normal at $\ga$ defined a.e. on $\ga$ 
w.r.t. the $(N-1)$-dimensional Lebesgue measure. 
Since $r$ is normal on $\ga$, we conclude for almost all $x\in\ga\setminus\calN_r$ 
\begin{align}
\label{rnu}
\nu(x)=\pm\frac{r(x)}{\norm{r(x)}}.
\end{align}
Because $\om$ is locally on one side of the boundary $\ga$, 
the unit normal $\nu$ cannot change sign in \eqref{rnu}
in any connected component of $\ga\setminus\calN_r$. 
But since $d_{\calN_r}\leq N-2$, it follows that $\ga\setminus\calN_r$ is connected, 
and w.l.o.g.
\begin{align}
\label{sign}
\nu(x)=\frac{r(x)}{\norm{r(x)}}\qquad\text{for almost all }x\in\ga\setminus\calN_r.
\end{align}
As $\ga\cap\calN_r$ has measure zero, we can replace 
$\ga\setminus\calN_r$ by $\ga$ in \eqref{sign}. 
With Gau{\ss}' theorem we conclude
\begin{align*}
0=\int_\om\div r
=\int_\ga\nu\cdot r
=\int_{\ga}\norm{r}>0,
\end{align*}
a contradiction.
\end{proof}

Next we turn to the full normal boundary condition, i.e. $\gat=\emptyset$. 
In \cite{desvillettesvillanikornnormal}
it is proved that for smooth bounded domains $\om\subset\rN$ 
Korn's first inequality holds for all $v\in \hocnom$, i.e. $\K=\left\{0\right\}$,
if and only if $\om$ is not axisymmetric. 
Furthermore  an explicit upper bound 
on the constant is given.\footnote{In \cite{desvillettesvillanikornnormal}  
a $\co$-boundary is assumed, but it seems that for the proof of 
\cite[Lemma 4]{desvillettesvillanikornnormal} actually a $\ct$-boundary is 
needed in order to guaranty $\ho$-regularity of  
$\na\phi$, where $\phi$ is the solution of \cite[(14)]{desvillettesvillanikornnormal}.}
In that contribution and here axisymmetry is defined as follows.

\begin{defi}
\label{axisymmetry-def}
$\om$ is called axisymmetric if there
is a non-trivial rigid motion $r\in\calR$
tangential to the boundary $\ga$ of $\om$, 
i.e. $0\neq r\in\hocnom$.
\end{defi}

In a more elementary and canonical approach in $\rt$
a domain is called axisymmetric
w.r.t. an axis $a$ if it is a body of rotation around this axis.
In order to show that in $\rt$ both concepts coincide for bounded Lipschitz domains,
we make use of the invariance of a Lipschitz boundary 
under the flow of a tangential vector field.

\begin{pro}
\label{tangentialflow-lem}
\mylabel{normalboundaryconditioninvariantflow}
Let $\om\subset\rN$ be a (not necessarily bounded) 
domain with a (strong) Lipschitz boundary $\ga$ and 
$r:\rN\to\rN$ a locally Lipschitz continuous 
vector field that is tangential on $\ga$ a.e.~w.r.t. 
the $(N-1)$-dimensional Lebesgue measure on $\ga$.
Let $p\in\ga$ and let $t\mapsto\gamma(t)$ the maximal solution 
of the ordinary differential equation
\begin{equation}
\label{ODE}
\dot{\gamma}=r(\gamma),\quad 
\gamma(0)=p
\end{equation}
existing on the interval $I_p$.
Then for all $t\in I_p$
\begin{equation}
\gamma(t)\in\ga.
\end{equation} 
\end{pro}

This proposition is a variant of Nagumo's invariance theorem, 
see \cite[Theorem 2, p. 180]{aubincellina:84}, c.f. also \cite{nagumo:42},
where the tangential condition on $r$ is defined in terms of
a so called 'Bouligand contingent cone'.
As we need this statement for a Lipschitz boundary we give a
self-contained proof in the Appendix.

The next lemma states that for bounded domains in $\rt$ 
both definitions of axisymmetry coincide. 
An elementary proof is provided in the appendix.

\begin{lem}
\label{equivalence}
Let $\om\subset\rt$ be a bounded Lipschitz domain. 
\begin{itemize}
\item[\bf(i)] 
Assume $\sigma,b\in\rt$, $\norm{\sigma}=1$ 
and let $g=\setlr{\lambda\sigma+b}{\lambda\in\reals}$. 
Assume that $\om$ is axisymmetric w.r.t.~the axis $g$. 
Then the vector field $r$ with $r(x):=\sigma\wedge(x-b)$
is a rigid motion, which is tangential at $\ga$, 
i.e. $r\in\calR\cap\hocnom$.
\item[\bf(ii)]
Let $r\in\calR\cap\hocnom$, $r(x)=\omega\,\sigma\wedge x+b$ for all $x\in\rt$
with $\sigma,b\in\rt$, $\norm{\sigma}=1$ and $\omega\in\reals$.
Then $\omega\neq 0$, $\scp{b}{\sigma}=0$, and 
$\om$ is axisymmetric w.r.t.~the axis 
$g=\setlr{\lambda \sigma+\frac{1}{w}\sigma\wedge b}{\lambda\in\reals}$. 
\end{itemize}
\end{lem}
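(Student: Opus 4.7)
The plan is to treat the two directions separately by exploiting the flow of the rigid motion and invoking Proposition \ref{tangentialflow-lem} to pass between vector-field tangency and pointwise invariance of $\ga$.

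For part (i), I would argue that a body of rotation about the axis $g=\{\lambda\sigma+b:\lambda\in\reals\}$ is by definition invariant under the one-parameter group of rotations $\phi_t$ about $g$, whose infinitesimal generator is exactly $r(x)=\sigma\wedge(x-b)$. From $\phi_t(\ga)=\ga$ it follows that for a.e.\ $p\in\ga$ (where $\ga$ is locally a Lipschitz graph and $\nu(p)$ exists) the curve $t\mapsto\phi_t(p)\in\ga$ has velocity $r(p)$ at $t=0$, which must be tangent to $\ga$. Hence $r\cdot\nu=0$ a.e.\ on $\ga$, and the standard density of smooth tangential vector fields in the tangential trace space places the smooth rigid motion $r$ in $\rmR\cap\hocnom$.

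Part (ii) is the substantive direction. With $r(x)=\omega\sigma\wedge x+b$ non-trivial, I would first rule out $\omega=0$: otherwise $r\equiv b\neq 0$ and Proposition \ref{tangentialflow-lem} gives $p+tb\in\ga$ for every $p\in\ga$ and every $t\in\reals$, contradicting boundedness of $\ga$. With $\omega\neq 0$, solving $\omega\sigma\wedge x_0+b\parallel\sigma$ identifies the candidate axis $g=\{\lambda\sigma+\tfrac{1}{\omega}\sigma\wedge b:\lambda\in\reals\}$. Shifting the origin onto $g$ transforms $\dot\gamma=r(\gamma)$ into $\dot y=\omega\sigma\wedge y+c\sigma$ with $c:=\scp{b}{\sigma}$; decomposing $y=y_\sigma+y_\perp$ along $\sigma$ and $\sigma^\perp$ decouples the system into a bounded rotation in $\sigma^\perp$ and a linear drift $y_\sigma(t)=y_\sigma(0)+ct$ along $\sigma$.

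The crucial step is then to force $c=0$: Proposition \ref{tangentialflow-lem} traps every trajectory issuing from $\ga$ inside the bounded set $\ga$, so the linear $\sigma$-drift must vanish, giving $\scp{b}{\sigma}=0$ and identifying the flow of $r$ with a pure one-parameter rotation group $\phi_t$ about $g$. To lift $\phi_t(\ga)=\ga$ to $\phi_t(\om)=\om$, each $\phi_t$ is a homeomorphism of $\rt$ mapping connected components of $\rt\setminus\ga$ to connected components and depending continuously on $t$ with $\phi_0(\om)=\om$, so $\phi_t(\om)=\om$ for all $t$, which is precisely axisymmetry about $g$. I expect the main obstacle to be the careful application of Proposition \ref{tangentialflow-lem} under the mere Lipschitz regularity of $\ga$, together with the component argument lifting boundary invariance to domain invariance; the remaining steps are routine linear algebra.
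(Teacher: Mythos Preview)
Your proposal is correct and follows essentially the same approach as the paper: both directions use Proposition~\ref{tangentialflow-lem} to translate between tangency of $r$ and invariance of $\ga$ under the flow, solve the flow ODE (you more conceptually via a rotation-plus-drift decomposition, the paper via explicit coordinates in a basis $\{\sigma_1,\sigma_2,\sigma\}$), and use boundedness of $\ga$ to force $\omega\neq 0$ and $\scp{b}{\sigma}=0$. Your final connected-components argument lifting $\phi_t(\ga)=\ga$ to $\phi_t(\om)=\om$ is a point the paper actually glosses over with a bare ``Consequently,'' so your version is slightly more careful there.
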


\begin{rem}
\label{unbounded} 
There are rigid motions tangential to the boundary 
of some \underline{unbounded} domains in  $\rt$,
which do not exhibit any axis of symmetry. 
Consider, for example,  
a domain $\om$ built from a plane square which simultaneously 
is lifted along and rotated around
the axis perpendicular to it, e.g.
$$\om:=\setb{\big(x_1\cos(t)-x_2\sin(t),x_1\sin(t)+x_2\cos(t),t\big)^t}
{\norm{x_1}+\norm{x_2}<1,\,t\in\reals}.$$
Then $r(x):=(-x_2, x_1, 1)^t$ is tangential to $\ga$. 
\end{rem}

Using Definition \ref{axisymmetry-def}, Korn's first inequality 
for normal boundary conditions is more or less obvious.  

\begin{theo}
Let $\gat=\emptyset$. 
Then Korn's first inequality holds for all $v\in\hocnom$,
if and only if $\K=\{0\}$,
if and only if $\om$ is not axisymmetric.
\end{theo}

\begin{proof}
The first 'if and only if' is just the assertion of Theorem \ref{firstkorntannor}. 
For the second 'if and only if' according to the definition of axisymmetry
the only remaining issue is to prove that there is no constant vector field
tangential to a bounded Lipschitz domain 
(in that case we would have a non-trivial rigid motion, 
which gives no contribution to $\K$). 
Assume that a constant vector $0\neq a\in\rN$ tangential to $\ga$ exists,
i.e. $a\in\hocnom$, and let $\hat{x}\in\ga$. 
Then according to Proposition \ref{tangentialflow-lem}
the unbounded curve $t\mapsto\hat{x}+ta$ would remain in $\ga$,
which contradicts the boundedness of $\om$.
\end{proof}

\begin{rem}
The latter proof shows that a bounded domain 
is axisymmetric if and only if 
there is a non-constant rigid motion tangential to the boundary.
\end{rem}

For mixed boundary conditions there are domains 
of rather special type with $\K\neq\{0\}$.
Consider, for example, a half cylinder 
$$\om:=\setlr{x\in\rt}{x_1>0,\,x_1^2+x_2^2<1,\,0<x_3<1},$$ 
or more generally, the domain
$$\om:=\setlr{(r\cos\phi,r\sin\phi,x_3)^t}{\phi_1<\phi<\phi_2,\,0<x_3<1,\,0< r<h(x_3)}$$
with $\gat:=\ga\cap\setlr{(r\cos \phi_{1/2},r\sin\phi_{1/2},x_3)^t}{0\leq r,\,0<x_3<1}$ 
and for some positive Lipschitz function $ h:\reals\to\reals$ and some $-\pi<\phi_1<\phi_2<\pi$.
Define $r(x):=(-x_2, x_1, 0)^t$. 
Then $r$ is a rigid motion and $r\in\hoctnom$.
In the next theorem we will show that in $\rt$ all bounded domains $\om$ 
with $\K\neq \left\{0\right\}$ are compositions of subdomains of this kind.

\begin{theo}
\label{gemischteRandbedingungen-lem}
Let $\om\subset\rt$ be a bounded Lipschitz domain 
and let $\emptyset\neq\gat\neq\ga$.
Assume that there is a non-constant rigid motion $r\in\calR\cap\hoctnom$, 
$r(x)=\omega\,\sigma\wedge x+b$ for all $x\in\rt$ 
with $\omega\in\reals$ and $\norm{\sigma}=1$. 
Define $g_r\subset\rt$ by
$g_r:=\setlr{\lambda\,\sigma+\frac{1}{\omega}\,\sigma\wedge b}{\lambda\in\reals}$.
Then $\scp{\sigma}{b}=0$, $\gat$ is a subset of 
a union of affine planes, where each of these planes contains $g_r$. 
Every connected component of $\gan$ 
is a subset of a surface which is axisymmetric w.r.t.~$g_r$.
\end{theo}

By this theorem the aforementioned cube, i.e.
$\om=(0,1)^3\subset\rt$ with
$\gat$ being the union of the top and bottom faces,
has a trivial kernel $\K=\{0\}$, 
which means Korn's first inequality Theorem \ref{firstkorntannor} 
holds on $\hoctnom$, 
while Poincar\'e's inequality Lemma \ref{poincaretannor} only holds
on $\hoctnom\cap\big((0,0,1)^t\big)^\perp$.

\begin{proof} 
First we note that the scalar-product $\scp{\sigma}{b}$ is independent 
of the chosen Cartesian coordinates, i.e. if we choose
another positively oriented Euclidian coordinate system $(y_1, y_2,y_3)$ and represent the 
vector field $r$ by means of the $y$-coordinates, 
then there exist vectors $\sigma_y,b_y\in\rt$ 
with $\norm{\sigma_y}=1$ and 	
$r(y)=\omega\,\sigma_y\wedge y+b_y$ for all $y\in\rt$. 
Furthermore $\scp{\sigma_y}{b_y}=\scp{\sigma}{b}$.
In the same way the representation of the axis $g_r$ associated 
to $r$ is independent of the Cartesian coordinates chosen;
in $y$-coordinates we have 
$g_r=\setlr{\lambda\,\sigma_y+\frac{1}{\omega}\sigma_y\wedge b_y}{\lambda\in\reals}$.

Suppose $r\in\calR\cap\hoctnom$ and that $r$ is not constant. 
We fix some $p\in\gat$ 
together with a neighborhood $U\subset\rt$ of $p$, 
an open subset $V\subset\reals^2$, 
Euclidian coordinates $(x_1,x_2,x_3)=(x',x_3)$ 
and a Lipschitz map $h:V\subset\reals^2\to\reals$,
such that for all $x\in U$ we have $x=(x',x^3)\in\gat$ 
if and only if $x^3=h(x')$.
Since $r$ is normal and by Rademacher's theorem, we have
\begin{align}
\label{normal-equation}
r(x',h(x'))
&=f(x')\big(\na_{x'}h(x'),-1\big)^t
\end{align}
with some function $f:V\subset\reals^2\to\reals$ a.e.~in $V$.
	
In $x$-coordinates $r$ can be represented by $r(x)=\omega\,\sigma\wedge x+b$ 
with some $b,\sigma\in\rt$, $\norm{\sigma}=1$ and $0\neq\omega\in\reals$.
From \eqref{normal-equation} we conclude
\begin{align}	
\label{K1}
b_1+\omega\,\sigma_2h(x')-\omega\,\sigma_3x_2 
&=f(x')\p_1h(x'),\\
\label{K2}
b_2+\omega\,\sigma_3x_1-\omega\,\sigma_1h(x')
&=f(x')\p_2h(x'),\\
\label{K3}
b_3+\omega\,\sigma_1x_2-\omega\,\sigma_2x_1
&=-f(x').
\end{align}
We differentiate (in the sense of distributions) \eqref{K1} 
w.r.t.~$x_2$ and \eqref{K2} w.r.t.~$x_1$, 
compute the  difference as well as the sum of the resulting equations, 
and conclude using \eqref{K3}
\begin{align}
\label{sigma3}
\sigma_3
&=\sigma_1\p_1h+\sigma_2\p_2h,\\ 
\label{gemischt}
0
&= f\p_1\p_2h.
\end{align}
Differentiating \eqref{K1} w.r.t.~$x_1$ and \eqref{K2} w.r.t.~$x_2$	yields 
\begin{equation}
\label{doppelt}
f\p_1^2h=f\p_2^2h=0.
\end{equation}
Now we multiply \eqref{K1} by $\sigma_1$, \eqref{K2} by $\sigma_2$, 
equate the resulting equations for 
$\sigma_1 \sigma_2 h$, use \eqref{K3}, 
\eqref{sigma3}, and obtain
\begin{equation}
\label{skp}
0=\scp{b}{\sigma}.
\end{equation}
From \eqref{gemischt}, \eqref{doppelt} 
we conclude that $\na_{x'}h$ is constant on connected components of 
$V\cap\{f\neq 0\}$. Therefore, $h$ is  
an affine function on each part and continuous on the whole of $V$. 
Note that $\{f=0\}$ is a subset of the line 
$\mathcal N_{\sigma,b}:=\setlr{x'\in\reals^2}{b_3+\omega\,\sigma_1 x_2-\omega\,\sigma_2 x_1=0}$.	
Now we extend the affine function from one connected component 
of $V\cap\{f\neq 0\}$ to $\reals^2$ and call the 
resulting affine function $\tilde{h}$. 
Because of \eqref{sigma3} the plane 
$\mathcal E_{\tilde{h}}:=\setb{(x',\tilde{h}(x'))}{x'\in\reals^2}$ 
is collinear to $g_r$. 
Recalling $\scp{\sigma}{b}=0$, 
it is straightforward to check that $g_r$ is the affine kernel of $r$.
Now we use this fact together with
the collinearity of $\mathcal E_{\tilde{h}}$ and $g_r$ 
in order to prove 
$g_r\subset \mathcal E_{\tilde{h}}$.
It is sufficient to show that 
$\mathcal E_{\tilde{h}}\cap\{r=0\}$ is not void.
But in view of \eqref{K3} and \eqref{normal-equation} this is obvious.

Now let $p\in\gan$. Since $\scp{\sigma}{b}=0,$
the solutions $\gamma$ of $\dot{\gamma}=r(\gamma)$ are circles, 
contained in planes perpendicular to 
$g_r$ and with centers on $g_r$ 
(See also the computations in the proof of Lemma \ref{equivalence}.).
Hence, applying Proposition \ref{tangentialflow-lem}, 
every connected component is a subset of some 
hyper surface being axisymmetric w.r.t.~$g_r$.
\end{proof}	
	
\section{Appendix}

\begin{proof}[Proof of Proposition \ref{tangentialflow-lem}]
Clearly, it is sufficient to prove the invariance locally.
Since $\ga$ is Lipschitz, after rotation there is a neighborhood $U=V\times I$ 
of $p$ with $V\subset\reals^{N-1}$, $I\subset\reals$, 
orthonormal coordinates $(x^1,\ldots,x^N)=(x',x^N)\in V\times I$, 
a point $x'_0\in V$ and a Lipschitz continuous function 
$h:V\to I$ such that $p=(x'_0,h(x'_0))$, and for all $x\in U$ 
we have $x\in\ga$ iff $x^N= h(x')$.
By Rademacher's theorem $h$ is differentiable a.e.~with respect to the 
$(N-1)$-dimensional Lebesgue measure on $V$, and 
$\na_{x'}h\in\li(V)$.
Furthermore, the set of the $N-1$ vectors
\begin{equation*}
t_1(x'):=\big(1,0,\ldots,0,\p_1h(x')\big)^t,\,\ldots\,,
t_{N-1}(x'):=\big(0,\ldots,0,1,\p_{N-1}h(x')\big)^t
\end{equation*}
gives a basis of the tangential space of $\ga$ in the point $(x',h(x'))$ 
for almost all $x'\in V$.
Therefore, on $\ga\cap\,U$ we have two representations of the vector field $r$, 
one representation in the coordinate vectors of $x^1,\ldots,x^N$ 
holding on the whole of $U$,
$$r(x)=r_U(x)=\big(r_U^1(x),\ldots,r_U^N(x)\big)^t,$$
and the functions $r_U^i$, $i=1,\ldots,N$, are Lipschitz continuous functions on $U$.
On the other hand, for almost all $x'\in V$
$$r(x',h(x'))=r_V^1(x')t_1 (x')+\cdots+r_V^{N-1}(x')t_{N-1}(x').$$
We define $r_V:=(r_V^1,\ldots,r_V^{N-1})^t$.
Comparison yields a.e.~on $V$ and for all $i=1,\ldots,N-1$
\begin{equation}
\label{Koeffizientenfunktionen}
r_U^i(x',h(x'))=r_V^i(x').
\end{equation}
Hence, $r_V$ is Lipschitz continuous on $V$. Furthermore,
\begin{equation}
\label{N-Funktion}
r_U^N (x',h(x'))
=r_V^1(x')\p_1h(x')+\cdots+r_V^{N-1}(x')\p_{N-1}h(x')
=r_V(x')\cdot\na_{x'}h(x')
\end{equation}
holds for almost all $x'\in V$. Since $h$ is Lipschitz on $V$ and $r_U^N$ 
is Lipschitz on $U$, $r_V\cdot\na_{x'}h$ is also Lipschitz on $V$.
Now we define the flow of $r_V$: For $x'\in V$ we set $\psi(\,\cdot,x')$ 
as the solution  of the ordinary differential equation
\begin{equation}
\label{Def-ODE-V}
\dot{\psi}(t,x')
=r_V\big(\psi(t,x')\big),\quad\psi(0,x')=x'.
\end{equation}
Since $r_V$ is Lipschitz on $V$, 
we can restrict the flow such that for some $\eps>0$ and some
neighborhood $\bar{V}\subset V$ of $x_0'$ the solution $\psi$ is Lipschitz continuous on 
$(-\eps,\eps)\times\bar{V}$.
Next we lift up this flow to $\ga$ and define
$$\gamma_V(t):=\big(\psi(t,x'_0),h(\psi(t, x'_0))\big)^t.$$
By definition $\gamma_V(0)=p$ and $\gamma_V(t)\in\ga$ 
for all $t\in(-\eps,\eps)$.

In the next step we have to prove that $\gamma_V$
is also a solution of \eqref{ODE} on $(-\eps,\eps)$. 
With regard to \eqref{Koeffizientenfunktionen} 
it only remains to prove that the mapping
$t\mapsto h(\psi(t,x'_0))$ is classically differentiable with derivative  
$\p_t\big(h(\psi(t,x'_0))\big)=r^N_U\big(\psi(t,x'_0),h(\psi(t,x'_0))\big)$. 
We denote the $l$-dimensional Lebesgue measure by $\calL^l$.
For all $t\in(-\eps,\eps)$ it holds that 
$\psi(t,\,\cdot\,)$ is a bi-Lipschitz homeomorphism with
inverse Lipschitz transformation $\psi(t,\,\cdot\,)^{-1}=\psi(-t,\,\cdot\,)$. 
Therefore, if $\calL^{N-1}(\psi(t,\,\cdot\,)(\tilde{V}))=0$ 
for some set $\tilde{V}\subset\bar{V}$, 
then also $\calL^{N-1}(\tilde{V})=0$, because 
$\tilde{V}=\psi(-t,\,\cdot\,)\big(\psi(t,\,\cdot\,)(\tilde{V})\big)$.
Fix a measurable set $V_0\subset V$ such that $\calL^{N-1}(V_0)=0$ and $h$ 
is classically differentiable for every $x'\in V\setminus V_0$.
Let us define 
$$W_0:=\setlr{(t,x)\in(-\eps,\eps)\times\bar{V}}{\psi(t,x)\in V_0}.$$
Then $W_0$ is measurable and using Tonelli's and Fubini's theorems 
and the change of variable formula we obtain
\begin{align*}
\calL^N(W_0) 
&=\int_{(-\eps,\eps)\times\bar{V}}{\mathbf 1}_{W_0}
\leq c\int_{(-\eps,\eps)}\int_{V_0}1=0.
\end{align*}
Therefore, and since $\psi$ is differentiable w.r.t.~$t$ everywhere, 
we have by using \eqref{N-Funktion}
\begin{equation}
\label{derivative-formula}
\p_th(\psi(t,x'))
=\na h(\psi(t,x'))\cdot\p_t\psi(t,x')
=r^N_U\big(\psi(t, x'),h(\psi(t, x'))\big)
\end{equation}
for almost all $(t,x')\in(-\eps,\eps)\times\bar{V}$.
Consequently this formula holds in the distributional sense.
Because $h\circ\psi$ is continuous 
and its distributional derivative w.r.t.~$t$ is also continuous, 
it is also differentiable w.r.t.~$t$ in the classical sense. 
This can be seen as follows: We define 
$$v(t,x')
:=h(\psi(0,x'))
+\int_0^tr^N\big(\psi(\tau,x'),h(\psi(\tau,x'))\big)\,\mathrm{d}\tau.$$
The vector field $v$ is classically differentiable  w.r.t.~$t$ 
and $\p_tv(t, x')=r^N_U\big(\psi(t,x'),h(\psi(t,x'))\big)$ 
holds for all $(t,x')\in(-\eps,\eps)\times\bar{V}$.
Furthermore, for all $\phi\in\cic\left((-\eps,\eps)\times\bar{V}\right)$
$$\int_{(-\eps,\eps)\times\bar{V}}
(v-h\circ\psi)\p_t\phi=0.$$
This yields $h\circ\psi(t, x')=v(t, x')+w(x')$.
Since for all $x'\in\bar{V}$ we have $h\circ\psi(0,x')=v(0,x')$,
we finally conclude $w= 0$ on $\bar{V}$ 
and hence $v=h\circ\psi$.
\end{proof}

\begin{proof}[Proof of Lemma \ref{equivalence}]
For (i) we choose $\sigma_1,\sigma_2\in\rt$
such that the set $\{\sigma_1,\sigma_2,\sigma\}$ 
gives a positively oriented orthonormal basis of $\rt$.
Let $x\in\ga$ and define $d:=\dist(g,x)$.
Since $\om$ is axisymmetric w.r.t.~$g$, for all $t\in\reals$
\begin{align*}
\gamma(t)
&:=\scp{x}{\sigma}\sigma
+\big(\scp{b}{\sigma_1}+d\cos(t)\big)\sigma_1
+\big(\scp{b}{\sigma_2}+d\sin(t)\big)\sigma_2\in\ga.
\end{align*}
Therefore, $\dot{\gamma}(t)$ is a tangential vector at $\ga$ located in $x$.
On the other hand
\begin{align*}
r(x)
&=\sigma\wedge(x-b)
=\sigma_2\scp{x-b}{\sigma_1}-\sigma_1\scp{x-b}{\sigma_2}\\
&=\sigma_2\bscp{\big(\scp{b}{\sigma_1}+d\cos(t)\big)\sigma_1-b}{\sigma_1}
-\sigma_1\bscp{\big(\scp{b}{\sigma_2}+d\sin(t)\big)\sigma_2-b}{\sigma_2}\\	
&=\sigma_2d\cos \left(t\right)-\sigma_1d\sin \left(t\right)
=\dot{\gamma}\left(t\right),
\end{align*}
which yields $r\in\hocnom\cap\calR$.

No we turn to the proof of (ii). 
If $\omega=0$ then $x(t)=x_0+tb$ remains in $\ga$ for all $t$
if $x_0\in\ga$ (Proposition \ref{tangentialflow-lem}) 
and $\om$ would be unbounded. Therefore, we have $\omega\neq0$.
We choose again $\sigma_1,\sigma_2\in\rt$
such that the set $\{\sigma_1,\sigma_2,\sigma\} $ 
gives an orthonormal basis of $\rt$ with positive orientation.	
The solution of the ordinary differential equation system
\begin{align*}
\dot{s}_1
&=-\omega s_2+\scp{b}{\sigma_1},
&
\dot{s}_2
&=\omega s_1+\scp{b}{\sigma_2},
&
\dot{s}_3
&=\scp{b}{\sigma},\\
s_1(0)
&=\scp{\hat{x}}{\sigma_1},
&
s_2(0)
&=\scp{\hat{x}}{\sigma_2},
&
s_3(0)
&=\scp{\hat{x}}{\sigma}
\end{align*}
is given by
\begin{align*}
s_1(t)
&=c_1\cos(\omega t)-c_2\sin(\omega t)-\frac{1}{\omega}\scp{b}{\sigma_2},\\
s_2(t)
&=c_1\sin(\omega t)+c_2\cos(\omega t)+\frac{1}{\omega}\scp{b}{\sigma_1},\\
s_3(t)
&=\scp{\hat{x}}{\sigma}+t\scp{b}{\sigma},
\end{align*}
where $c_1$ and $c_2$ are uniquely defined 
by the initial conditions on $s_1$ and $s_2$. Then 
$$x(t):=s_1(t)\sigma_1+s_2(t)\sigma_2+s_3(t)\sigma$$
is the unique solution of 
$$\dot{x}=r(x),\quad x(0)=\hat{x}.$$
Due to Proposition \ref{tangentialflow-lem}  
and since $r\in\hocnom$, we have $x(t)\in\ga$ for all $t\in\reals$.
Because $\om$ is bounded, we conclude $\scp{b}{\sigma}=0$.
Therefore, the trajectory $t\mapsto x(t)$ 
is a circle lying in a plane perpendicular to $\sigma$ with center 
$$-\frac{1}{\omega}\scp{b}{\sigma_2}\sigma_1
+\frac{1}{\omega}\scp{b}{\sigma_1}\sigma_2
+\scp{\hat{x}}{\sigma}\sigma.$$
Consequently, $\om$ is axisymmetric w.r.t.~to $g$.
\end{proof}



\bibliographystyle{plain} 
\bibliography{paule,Litbank}

\begin{thebibliography}{10}

\bibitem{aubincellina:84}
J.P. Aubin and A.~Cellina.
\newblock {\em Differential inclusions. Set-valued maps and viability theory.}
\newblock Springer, Berlin, 1984.

\bibitem{bauerpaulytangnormkorn}
S.~Bauer and D.~Pauly.
\newblock On {K}orn's first inequality for tangential or normal boundary
  conditions with explicit constants.
\newblock {\em Math. Methods Appl. Sci.}, 2016.

\bibitem{desvillettesvillanikornnormal}
L.~Desvillettes and C.~Villani.
\newblock On a variant of {K}orn's inequality arising in statistical mechanics.
  {A} tribute to {J}. {L}. {L}ions.
\newblock {\em ESAIM Control Optim. Calc. Var.}, 8:603--619, 2002.

\bibitem{desvillettesvillanitrendglequiboltzmann}
L.~Desvillettes and C.~Villani.
\newblock On the trend to global equilibrium for spatially inhomogeneous
  kinetic systems: the {B}oltzmann equation.
\newblock {\em Invent. Math.}, 159(2):245--316, 2005.

\bibitem{eiduslabp}
D.M. Eidus.
\newblock On the principle of limiting absorption.
\newblock {\em Mat. Sb. (N.S.)}, 57(99):13--44, 1962.

\bibitem{nagumo:42}
Mitio Nagumo.
\newblock \"{U}ber die {L}age der {I}ntegralkurven gew\"ohnlicher
  {D}ifferentialgleichungen.
\newblock {\em Proc. Phys.-Math. Soc. Japan (3)}, 24:551--559, 1942.

\bibitem{necasbook1967}
J.~Ne\u{c}as.
\newblock {\em Les m\'ethodes directes en th\'eorie des \'equations
  elliptiques}.
\newblock Academia, \'Editions de l'Acad. Tch\'ecoslovaque des Sciences, 1967.

\bibitem{necaspaper1967a}
J.~Ne\u{c}as.
\newblock Sur les normes equivalentes dans ${W}^{k,p}({\Omega})$ et sur la
  coercivite des formes formellement positives.
\newblock {\em Les Presses de l'Universite de Montreal}, pages 102--128, 1967.

\bibitem{paulymaxconst0}
D.~Pauly.
\newblock On constants in {M}axwell inequalities for bounded and convex
  domains.
\newblock {\em Zapiski POMI{\rm, 435:46-54, 2014}, \& J. Math. Sci. (N.Y.)},
  2014.

\bibitem{paulymaxconst2}
D.~Pauly.
\newblock On the {M}axwell constants in 3{D}.
\newblock {\em Math. Methods Appl. Sci.}, 2014.

\bibitem{paulymaxconst1}
D.~Pauly.
\newblock On {M}axwell's and {P}oincar\'e's constants.
\newblock {\em Discrete Contin. Dyn. Syst. Ser. S}, 8(3):607--618, 2015.

\bibitem{sohrbook}
H.~Sohr.
\newblock {\em The {N}avier-{S}tokes Equations}.
\newblock Birkh\"auser, Basel, 2001.

\bibitem{weckelacomp}
N.~Weck.
\newblock Local compactness for linear elasticity in irregular domains.
\newblock {\em Math. Methods Appl. Sci.}, 17:107--113, 1994.

\bibitem{witschremmax}
K.-J. Witsch.
\newblock A remark on a compactness result in electromagnetic theory.
\newblock {\em Math. Methods Appl. Sci.}, 16:123--129, 1993.

\end{thebibliography}

\end{document}